\newtheorem{theorem}{Theorem}[section]
\theoremstyle{plain}
\newtheorem{corollary}{Corollary}[section]
\newtheorem{definition}{Definition}
\newtheorem{lemma}{Lemma}[section]
\newtheorem{remark}{Remark}
\numberwithin{equation}{section}
\begin{document}
\title[New integral inequalities via $(\alpha,m)$-convexity and quasi-convexity ]{New integral inequalities via $(\alpha,m)$-convexity and quasi-convexity}
\author[W. J. Liu]{Wenjun Liu}
\address[W. J. Liu]{College of Mathematics and Statistics\\
Nanjing University of Information Science and Technology \\
Nanjing 210044, China}
\email{wjliu@nuist.edu.cn}

\subjclass[2000]{26D15, 26A51, 39B62}
\keywords{Hermite's  inequality, H\"{o}lder's inequality, $(\alpha,m)$-convexity, quasi-convexity}

\begin{abstract}
In this paper, we establish some new integral inequalities for $(\alpha, m)-$convex functions and quasi-convex functions, respectively. Our
results in special cases recapture known results.
\end{abstract}

\maketitle

\section{INTRODUCTION}

Let $I$ be on interval in $R$. Then $f:I\rightarrow R$ is said to be convex (see \cite[P.1]{MPF})
if 
\begin{equation*}
f\left( t x+\left( 1-t \right) y\right) \leq t f\left(
x\right) +\left( 1-t \right) f\left( y\right)
\end{equation*}%
holds for all $x,y\in I$ and $t \in \left[ 0,1\right]$.

In \cite{GT1}, Toader defined $m$-convexity as follows:

\begin{definition}
\label{d1} The function $f:\left[ 0,b\right] \rightarrow R$, $b>0$ is said
to be $m$-convex, where $m\in \left[ 0,1\right] $, if
\begin{equation*}
f\left( tx+m\left( 1-t\right) y\right) \leq tf\left( x\right) +m\left(
1-t\right) f\left( y\right)
\end{equation*}%
holds for all $x,y\in \left[ 0,b\right] $ and $t\in \left[ 0,1\right] .$We say
that $f$ is $m-$concave if $-f$ is $m-$convex.
\end{definition}


In \cite{VGM},   Mihe\c{s}an defined $\left( \alpha, m\right) -$
convexity as  follows:

\begin{definition}
\label{d.1.2} The function $f:\left[ 0,b\right] \rightarrow
\mathbb{R}
$,  $b>0$, is said to be $\left( \alpha, m\right) -$ convex, where $\left(
\alpha, m\right) \in \left[ 0,1\right] ^{2}$, if  
\begin{equation*}
f(tx+m(1-t)y)\leq t^{\alpha }f(x)+m(1-t^{\alpha })f(y)
\end{equation*}%
holds for all $x,y\in \left[ 0,b\right] $ and $t\in \left[ 0,1\right] $.
\end{definition}

Denote by $K_{m}^{\alpha }(b)$ the class of all $\left( \alpha, m\right) -$%
convex functions on $\left[ 0,b\right] $ for which $f(0)\leq 0$. It can be
easily seen that for $\left( \alpha, m\right) =\left( 1,m\right), $ $\left(
\alpha, m\right) -$ convexity reduces to $m-$ convexity and for $\left(
\alpha, m\right) =\left( 1,1\right) $, $\left( \alpha, m\right) -$ convexity
reduces to the concept of usual convexity defined on $\left[ 0,b\right] $,  $%
b>0$. For recent results and generalizations
concerning $m-$convex and $\left( \alpha, m\right)-$convex functions see
\cite{BOP, BPR, Dragomir, Ozdemir1, OSS, SSOR}.

We recall that the notion of quasi-convex functions generalizes the notion
of convex functions. More precisely, a function $f:[a,b]\rightarrow \mathbb{R}$ is said to be quasi-convex on $[a,b]$ if%
\begin{equation*}
f(\lambda x+(1-\lambda )y)\leq \max \{f(x),f(y)\}
\end{equation*}%
holds for any $x,y\in \lbrack a,b]$ and $\lambda \in \lbrack 0,1]$. Clearly, any
convex function is a quasi-convex function. Furthermore, there exist
quasi-convex functions which are not convex (see \cite{daI}).

One of the most famous inequalities for convex functions is Hadamard's
inequality. This double inequality is stated as follows: Let $f$ be a convex function on some
nonempty interval $[a,b]$ of real line $\mathbb{R}$, where $a\neq b$. Then
\begin{equation}
f\left( \frac{a+b}{2}\right) \leq \frac{1}{b-a}\int_{a}^{b}f(x)dx\leq \frac{%
f\left( a\right) +f\left( b\right) }{2}.  \label{1.1}
\end{equation}
Hadamard's
inequality for convex functions has received renewed attention in recent
years and a remarkable variety of refinements and generalizations have been
found (see, for example, \cite{Alomari}-\cite{Ozdemir1}, \cite{Pecaric}-\cite{SSOR}, \cite{THD}). In \cite{BOP}, Bakula et al. establish several Hadamard type inequalities for differentiable $m-$convex and $\left( \alpha, m\right)-$convex functions.

Recently, Ion \cite{daI} established two estimates on the Hermite-Hadamard inequality for functions
whose first derivatives in absolute value are quasi-convex. Namely, he
obtained the following results:

\begin{theorem}\label{th1.1}
Let $f:I\subset\mathbb{R}\rightarrow\mathbb{R}$ be a differentiable mapping on $I$, $a,b\in I$ with $a<b.$ If $%
\left\vert f^{\prime }\right\vert $ is quasi-convex on $[a,b]$, then the
following inequality holds:%
\begin{equation*}
\left\vert \frac{f(a)+f(b)}{2}-\frac{1}{b-a}\int_{a}^{b}f(u)du\right\vert
\leq \frac{b-a}{4}\left\{ \max \left\vert f^{\prime }(a)\right\vert
,\left\vert f^{\prime }(b)\right\vert \right\} .
\end{equation*}
\end{theorem}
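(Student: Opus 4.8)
The plan is to derive the bound from the classical integral representation of the left-hand side and then to exploit quasi-convexity pointwise inside the resulting integral. First I would establish (by a change of variable followed by one integration by parts) the identity
\begin{equation*}
\frac{f(a)+f(b)}{2}-\frac{1}{b-a}\int_{a}^{b}f(u)\,du=\frac{b-a}{2}\int_{0}^{1}(1-2t)\,f^{\prime}\bigl(ta+(1-t)b\bigr)\,dt .
\end{equation*}
To see this, write $f^{\prime}(ta+(1-t)b)=\tfrac{1}{a-b}\,\tfrac{d}{dt}f(ta+(1-t)b)$, integrate by parts with $1-2t$ as the factor to be integrated, notice that the boundary term equals $-f(a)-f(b)$ (since $1-2t$ equals $-1$ at $t=1$ and $+1$ at $t=0$), and convert $\int_{0}^{1} f(ta+(1-t)b)\,dt$ back to $\tfrac{1}{b-a}\int_{a}^{b} f(u)\,du$ via the substitution $u=ta+(1-t)b$; collecting the terms gives the identity.

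Next I would take absolute values and move the modulus inside the integral, obtaining
\begin{equation*}
\left\vert \frac{f(a)+f(b)}{2}-\frac{1}{b-a}\int_{a}^{b}f(u)\,du\right\vert \le \frac{b-a}{2}\int_{0}^{1}\lvert 1-2t\rvert\,\bigl\lvert f^{\prime}(ta+(1-t)b)\bigr\rvert\,dt .
\end{equation*}
For every $t\in[0,1]$ the point $ta+(1-t)b$ lies in $[a,b]$, so quasi-convexity of $\lvert f^{\prime}\rvert$ gives $\bigl\lvert f^{\prime}(ta+(1-t)b)\bigr\rvert\le \max\{\lvert f^{\prime}(a)\rvert,\lvert f^{\prime}(b)\rvert\}$, a bound that is uniform in $t$. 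Pulling this constant out of the integral leaves the elementary evaluation $\int_{0}^{1}\lvert 1-2t\rvert\,dt=\tfrac14+\tfrac14=\tfrac12$, so the right-hand side is at most $\tfrac{b-a}{2}\cdot\tfrac12\cdot\max\{\lvert f^{\prime}(a)\rvert,\lvert f^{\prime}(b)\rvert\}=\tfrac{b-a}{4}\max\{\lvert f^{\prime}(a)\rvert,\lvert f^{\prime}(b)\rvert\}$, which is exactly the asserted inequality.

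I do not anticipate a real obstacle. The points needing a little care are the integration by parts in the identity (tracking the sign of the boundary term and the factor $a-b$) and the observation that the quasi-convexity estimate for $\lvert f^{\prime}\rvert$ holds uniformly in $t$, which is what allows it to be taken outside the integral; with those in hand the conclusion follows from the splitting $\int_{0}^{1}=\int_{0}^{1/2}+\int_{1/2}^{1}$ used to compute the weight $\lvert 1-2t\rvert$.
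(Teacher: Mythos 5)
Your proof is correct: the identity you derive is the standard Dragomir--Agarwal lemma, and combining it with the uniform bound $\lvert f'(ta+(1-t)b)\rvert\le\max\{\lvert f'(a)\rvert,\lvert f'(b)\rvert\}$ from quasi-convexity and the evaluation $\int_0^1\lvert 1-2t\rvert\,dt=\tfrac12$ gives exactly the stated constant $\tfrac{b-a}{4}$. The paper itself quotes this theorem from Ion's article without reproducing a proof, but your argument is precisely the one used in that source, so there is nothing to add.
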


\begin{theorem}\label{th1.2}
Let $f:I\subset\mathbb{R}\rightarrow\mathbb{R}$ be a differentiable mapping on $I$, $a,b\in I$ with $a<b$ and let $p>1$. If $%
\left\vert f^{\prime }\right\vert ^{\frac{p}{p-1}}$ is quasi-convex on $%
[a,b],$ then the following inequality holds:%
\begin{equation*}
\left\vert \frac{f(a)+f(b)}{2}-\frac{1}{b-a}\int_{a}^{b}f(u)du\right\vert
\leq \frac{b-a}{2(p+1)^{\frac{1}{p}}}\left( \max \left\{ \left\vert
f^{\prime }(a)\right\vert ^{\frac{p}{p-1}},\left\vert f^{\prime
}(b)\right\vert ^{\frac{p}{p-1}}\right\} \right) ^{\frac{p-1}{p}}.
\end{equation*}
\end{theorem}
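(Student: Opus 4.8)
The plan is to follow the now-standard Dragomir--Agarwal technique: first reduce the quantity on the left to a single weighted integral of $f'$ by an integration by parts, and then estimate that integral using H\"older's inequality together with the quasi-convexity hypothesis on $|f'|^{p/(p-1)}$.

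The first step is to establish the identity
\begin{equation*}
\frac{f(a)+f(b)}{2}-\frac{1}{b-a}\int_{a}^{b}f(u)\,du=\frac{b-a}{2}\int_{0}^{1}(1-2t)\,f'\bigl(ta+(1-t)b\bigr)\,dt .
\end{equation*}
To verify it I would integrate the right-hand side by parts, taking the antiderivative of $f'(ta+(1-t)b)$ with respect to $t$ to be $\tfrac{1}{a-b}f(ta+(1-t)b)$; the boundary term produces $\tfrac{1}{b-a}\bigl(f(a)+f(b)\bigr)$ and the remaining integral becomes $-\tfrac{2}{(b-a)^{2}}\int_{a}^{b}f(u)\,du$ after the substitution $u=ta+(1-t)b$. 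Taking absolute values and pulling them inside the integral gives
\begin{equation*}
\left\vert \frac{f(a)+f(b)}{2}-\frac{1}{b-a}\int_{a}^{b}f(u)\,du\right\vert \le \frac{b-a}{2}\int_{0}^{1}|1-2t|\,\bigl|f'(ta+(1-t)b)\bigr|\,dt .
\end{equation*}

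The second step is to apply H\"older's inequality to the last integral with conjugate exponents $p$ and $q=\tfrac{p}{p-1}$, placing the weight $|1-2t|$ in the $L^{p}$ factor and $|f'|$ in the $L^{q}$ factor:
\begin{equation*}
\int_{0}^{1}|1-2t|\,\bigl|f'(ta+(1-t)b)\bigr|\,dt\le \left(\int_{0}^{1}|1-2t|^{p}\,dt\right)^{1/p}\left(\int_{0}^{1}\bigl|f'(ta+(1-t)b)\bigr|^{p/(p-1)}\,dt\right)^{(p-1)/p}.
\end{equation*}
Here $\int_{0}^{1}|1-2t|^{p}\,dt=\tfrac{1}{p+1}$ by the substitution $s=|2t-1|$ and symmetry about $t=\tfrac12$. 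For the other factor I would use the quasi-convexity of $|f'|^{p/(p-1)}$ on $[a,b]$: since $ta+(1-t)b\in[a,b]$, we have $\bigl|f'(ta+(1-t)b)\bigr|^{p/(p-1)}\le \max\{|f'(a)|^{p/(p-1)},|f'(b)|^{p/(p-1)}\}$ for all $t\in[0,1]$, so integrating over $t\in[0,1]$ leaves the maximum unchanged. Substituting the two estimates back and simplifying $2(p+1)^{1/p}$ in the denominator yields exactly the asserted bound.

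There is no deep obstacle; the delicate points are purely computational --- getting the signs and the orientation of the substitution correct in the integration-by-parts identity, and keeping the conjugate exponent bookkeeping straight, since $q=\tfrac{p}{p-1}$ forces $1/q=\tfrac{p-1}{p}$, which is the exponent appearing on the maximum in the statement. It is worth noting that this argument runs parallel to the proof of Theorem~\ref{th1.1}: there one uses the $L^{1}$--$L^{\infty}$ pairing (bounding $\int_{0}^{1}|1-2t|\,dt=\tfrac14$ times the essential supremum of $|f'|$) in place of the $L^{p}$--$L^{q}$ pairing, so the two results are complementary, neither implying the other for all $p>1$.
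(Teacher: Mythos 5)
Your argument is correct and complete: the integration-by-parts identity, the H\"older split with exponents $p$ and $p/(p-1)$, the value $\int_0^1|1-2t|^p\,dt=\tfrac{1}{p+1}$, and the pointwise quasi-convexity bound all check out and assemble into exactly the stated inequality. The paper does not reproduce a proof of this theorem (it is quoted from Ion's cited work), but your route is precisely the standard Dragomir--Agarwal argument used there, so there is nothing to flag.
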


In \cite{ADD}, Alomari et al. obtained the following result.

\begin{theorem}\label{th1.3}
Let $f:I\subset\mathbb{R}\rightarrow\mathbb{R}$ be a differentiable mapping on $I$, $a,b\in I$ with $a<b$ and let $q\ge 1$. If $%
\left\vert f^{\prime }\right\vert ^{q}$ is quasi-convex on $[a,b],$ then the following inequality holds:%
\begin{equation*}
\left\vert \frac{f(a)+f(b)}{2}-\frac{1}{b-a}\int_{a}^{b}f(u)du\right\vert
\leq \frac{b-a}{4}\left( \max \left\{ \left\vert
f^{\prime }(a)\right\vert ^{q},\left\vert f^{\prime
}(b)\right\vert ^{q}\right\} \right) ^{\frac{1}{q}}.
\end{equation*}
\end{theorem}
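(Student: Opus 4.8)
The plan is to reduce everything to the now-standard Dragomir--Agarwal integral identity: for a differentiable $f$ on $I$ with $a,b\in I$, $a<b$,
\begin{equation*}
\frac{f(a)+f(b)}{2}-\frac{1}{b-a}\int_{a}^{b}f(u)\,du=\frac{b-a}{2}\int_{0}^{1}(1-2t)\,f'\big(ta+(1-t)b\big)\,dt.
\end{equation*}
This is obtained by one integration by parts on the right together with the substitution $u=ta+(1-t)b$, and it requires no convexity hypothesis. Taking absolute values and passing them inside the integral gives
\begin{equation*}
\left\vert \frac{f(a)+f(b)}{2}-\frac{1}{b-a}\int_{a}^{b}f(u)\,du\right\vert \le \frac{b-a}{2}\int_{0}^{1}\left\vert 1-2t\right\vert \,\big\vert f'\big(ta+(1-t)b\big)\big\vert \,dt.
\end{equation*}

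Next I would apply the power-mean (weighted Hölder) inequality with weight $g(t)=|1-2t|$ and exponents $q/(q-1)$ and $q$ — this is exactly where the hypothesis $q\ge 1$ enters:
\begin{equation*}
\int_{0}^{1}\left\vert 1-2t\right\vert \,\big\vert f'\big(ta+(1-t)b\big)\big\vert \,dt\le \left(\int_{0}^{1}\left\vert 1-2t\right\vert \,dt\right)^{1-\frac1q}\left(\int_{0}^{1}\left\vert 1-2t\right\vert \,\big\vert f'\big(ta+(1-t)b\big)\big\vert ^{q}\,dt\right)^{\frac1q}.
\end{equation*}
For $q=1$ this step is vacuous and the argument still proceeds.

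Then, because $|f'|^{q}$ is quasi-convex on $[a,b]$ and $ta+(1-t)b$ is a convex combination of the endpoints, one has $|f'(ta+(1-t)b)|^{q}\le \max\{|f'(a)|^{q},|f'(b)|^{q}\}$ for all $t\in[0,1]$, so this constant factors out of the second integral. It then suffices to compute the elementary integral $\int_{0}^{1}|1-2t|\,dt=\tfrac12$ (split the interval at $t=\tfrac12$); both factors combine to $(\tfrac12)^{1-1/q}(\tfrac12)^{1/q}=\tfrac12$, and collecting constants yields $\tfrac{b-a}{2}\cdot\tfrac12\cdot\big(\max\{|f'(a)|^{q},|f'(b)|^{q}\}\big)^{1/q}$, which is the asserted bound.

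I do not expect a genuine obstacle here: the only delicate points are getting the exponents in the power-mean step right and noticing that the two elementary integrals recombine to exactly $1/2$ independently of $q$, so that the constant $\tfrac{b-a}{4}$ is uniform in $q$. It is also worth remarking that for $q=1$ the inequality coincides with Theorem \ref{th1.1}, so Theorem \ref{th1.3} is an interpolation-type strengthening rather than an essentially new estimate.
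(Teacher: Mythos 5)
Your argument is correct: the Dragomir--Agarwal identity, the power-mean (weighted H\"older) step with exponents $q/(q-1)$ and $q$, the quasi-convexity bound $|f'(ta+(1-t)b)|^{q}\le\max\{|f'(a)|^{q},|f'(b)|^{q}\}$, and the computation $\int_{0}^{1}|1-2t|\,dt=\tfrac12$ all check out and recombine to the constant $\tfrac{b-a}{4}$ exactly as you say. Note that the paper itself gives no proof of Theorem \ref{th1.3} --- it is quoted from Alomari, Darus and Dragomir \cite{ADD} --- and your argument is precisely the standard one used there, so there is nothing to reconcile.
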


In \cite{OSA},  \"{O}zdemir et al. used the
following lemma in order to establish several integral inequalities via some kinds of convexity.

\begin{lemma}\label{le1.1} Let $f: [a,b]\subset [0, \infty)\rightarrow\mathbb{R}$ be continuous on $[a,b]$ such that $f \in L([a,b])$, $a<b$. Then the equality
\begin{equation}\label{1.2}
\int_a^b (x-a)^p(b-x)^qf(x)dx=(b-a)^{p+q+1} \int_0^1 (1-t)^pt^qf(ta+(1-t)b)dt
\end{equation}
holds for some fixed $p, q > 0$.
\end{lemma}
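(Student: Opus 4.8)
The plan is to establish the identity by a single affine change of variables in the integral on the left-hand side, so the proof is essentially a substitution argument. First I would set $x = ta + (1-t)b$ for $t \in [0,1]$. This is an affine bijection of $[0,1]$ onto $[a,b]$ that reverses orientation: $t = 1$ corresponds to $x = a$ and $t = 0$ to $x = b$, and $dx = -(b-a)\,dt$. Since $a \ge 0$ and $b > a$, every such $x$ lies in $[a,b] \subset [0,\infty)$, so the (real) powers appearing below are well defined and $f$ is evaluated only at points of its domain.

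Next I would rewrite the two polynomial factors in terms of $t$. A direct computation gives $x - a = (1-t)(b-a)$ and $b - x = t(b-a)$, whence $(x-a)^p (b-x)^q = (b-a)^{p+q}(1-t)^p t^q$. Substituting this together with the Jacobian into $\int_a^b (x-a)^p(b-x)^q f(x)\,dx$, and then flipping the limits from $\int_1^0$ to $\int_0^1$ so that the orientation-reversal sign is absorbed, I collect the powers of $(b-a)$ to obtain the factor $(b-a)^{p+q+1}$ and the integrand $(1-t)^p t^q f(ta + (1-t)b)$, which is exactly the right-hand side. Continuity of $f$ on $[a,b]$ together with $f \in L([a,b])$ ensures the transformed integrand is integrable on $[0,1]$, so the change of variables is legitimate.

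There is no real obstacle here; the only points needing care are the orientation bookkeeping (the reversal of limits combined with the negative Jacobian) and correctly tracking that the powers of $(b-a)$ accumulate to the exponent $p + q + 1$. Once these routine verifications are carried out, the equality follows at once.
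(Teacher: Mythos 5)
Your change-of-variables argument is correct: the substitution $x=ta+(1-t)b$ gives $x-a=(1-t)(b-a)$, $b-x=t(b-a)$, $dx=-(b-a)\,dt$, and the sign from the reversed limits cancels the negative Jacobian, yielding exactly the stated identity. The paper itself states this lemma without proof (it is quoted from \cite{OSA}), and your substitution computation is precisely the standard argument used there, so nothing further is needed.
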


Especially, \"{O}zdemir et al. \cite{OSA} discussed the following new
results connecting with $m-$convex function and quasi-convex function, respectively:
\begin{theorem}\label{th1.4}
Let $f: [a,b]\rightarrow\mathbb{R}$ be continuous on $[a,b]$ such that $f \in L([a,b])$, $0\le a<b<\infty$. If $f$ is $m-$convex on $[a,b]$,  for some fixed $m\in (0, 1]$ and $p, q > 0$, then
\begin{align}\label{1.3}
&\int_a^b (x-a)^p(b-x)^qf(x)dx\nonumber\\
\leq & (b-a)^{p+q+1} \min\left\{\beta(q+2, p+1)f(a)+m\beta(q+1, p+2)f\left(\frac{b}{m}\right),\right.\nonumber\\
 & \left.\beta(q+1, p+2)f(b)+m\beta(q+2, p+1)f\left(\frac{a}{m}\right)
\right\},
\end{align}
where $\beta (x, y)$ is the Euler Beta function.
\end{theorem}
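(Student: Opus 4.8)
The plan is to combine the integral identity of Lemma~\ref{le1.1} with the defining inequality of $m$-convexity (Definition~\ref{d1}), applied in two complementary ways, and then to evaluate the resulting elementary integrals in terms of the Beta function.

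First I would invoke Lemma~\ref{le1.1} to rewrite
\[
\int_a^b (x-a)^p(b-x)^q f(x)\,dx = (b-a)^{p+q+1}\int_0^1 (1-t)^p t^q f(ta+(1-t)b)\,dt.
\]
The next observation is that the argument $ta+(1-t)b$ can be written as $ta + m(1-t)\tfrac{b}{m}$, so the $m$-convexity of $f$ with $x=a$, $y=\tfrac bm$ yields $f(ta+(1-t)b)\le t f(a) + m(1-t) f\!\left(\tfrac bm\right)$. Substituting this bound under the integral sign and splitting the integrand $(1-t)^p t^q\bigl[t f(a) + m(1-t) f(\tfrac bm)\bigr]$ into two terms gives
\[
f(a)\int_0^1 (1-t)^p t^{q+1}\,dt + m f\!\left(\tfrac bm\right)\int_0^1 (1-t)^{p+1}t^q\,dt,
\]
which, upon recognizing $\int_0^1 t^{x-1}(1-t)^{y-1}\,dt = \beta(x,y)$, equals $\beta(q+2,p+1)f(a) + m\beta(q+1,p+2)f\!\left(\tfrac bm\right)$. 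After multiplying by $(b-a)^{p+q+1}$ this is the first expression inside the minimum.

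To obtain the second expression I would instead write $ta+(1-t)b = (1-t)b + m t\tfrac am$ and apply $m$-convexity with $x=b$, $y=\tfrac am$ and parameter $1-t$, obtaining $f(ta+(1-t)b)\le (1-t)f(b) + mt f\!\left(\tfrac am\right)$; the same integration then yields $\beta(q+1,p+2)f(b) + m\beta(q+2,p+1)f\!\left(\tfrac am\right)$ after multiplication by $(b-a)^{p+q+1}$. Since the left-hand side is dominated by each of the two quantities, it is dominated by their minimum, which is the claimed inequality. The computation is essentially routine; the only points requiring care are the bookkeeping of the Beta-function arguments (the shift by one in the exponent of $t$ versus $1-t$ according to which summand is being integrated) and checking that both decompositions of $ta+(1-t)b$ are legitimate instances of the $m$-convexity inequality, i.e. that $a,b,\tfrac am,\tfrac bm$ lie in the domain on which $f$ is assumed $m$-convex. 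No genuinely hard step is anticipated.
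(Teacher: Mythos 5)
Your proposal is correct and is essentially the paper's own argument: the paper obtains Theorem~\ref{th1.4} as the $\alpha=1$ case of Theorem~\ref{th2.1}, whose proof is exactly your combination of Lemma~\ref{le1.1} with the two decompositions $ta+m(1-t)\frac{b}{m}$ and (after swapping the roles of $a$ and $b$) $tb+m(1-t)\frac{a}{m}$, followed by Beta-function evaluation. The only cosmetic difference is that you realize the second bound by applying the convexity inequality with parameter $1-t$ under the same parametrization, whereas the paper re-parametrizes via $x=tb+(1-t)a$; these are equivalent under the substitution $t\mapsto 1-t$.
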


\begin{theorem}\label{th1.5}
Let $f: [a,b]\rightarrow\mathbb{R}$ be continuous on $[a,b]$ such that $f \in L([a,b])$, $0\le a<b<\infty$. If $f$ is quasi-convex on $[a,b]$, then for some fixed   $p, q > 0$, we have
\begin{align}\label{1.4}
 \int_a^b (x-a)^p(b-x)^qf(x)dx
\leq (b-a)^{p+q+1} \max\{f(a), f(b)\}\beta(p+1, q+1).
\end{align}
\end{theorem}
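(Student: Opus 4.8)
The plan is to reduce everything to Lemma \ref{le1.1} and then apply the defining inequality of quasi-convexity pointwise under the integral sign. First I would invoke Lemma \ref{le1.1} with the given fixed $p,q>0$ to rewrite the left-hand side as
\begin{equation*}
\int_a^b (x-a)^p(b-x)^q f(x)\,dx = (b-a)^{p+q+1}\int_0^1 (1-t)^p t^q f\bigl(ta+(1-t)b\bigr)\,dt .
\end{equation*}
This is legitimate since $f$ is continuous on $[a,b]\subset[0,\infty)$ and $f\in L([a,b])$, which are exactly the hypotheses of the lemma.

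Next I would use quasi-convexity of $f$ on $[a,b]$: for every $t\in[0,1]$ the point $ta+(1-t)b$ lies in $[a,b]$, hence
\begin{equation*}
f\bigl(ta+(1-t)b\bigr)\leq \max\{f(a),f(b)\}.
\end{equation*}
Because the weight $(1-t)^p t^q$ is nonnegative on $[0,1]$, multiplying through by it and integrating from $0$ to $1$ preserves the inequality (no sign restriction on $\max\{f(a),f(b)\}$ is needed), so
\begin{equation*}
\int_0^1 (1-t)^p t^q f\bigl(ta+(1-t)b\bigr)\,dt \leq \max\{f(a),f(b)\}\int_0^1 (1-t)^p t^q\,dt .
\end{equation*}
Finally I would identify the remaining integral as a Beta function: $\int_0^1 (1-t)^p t^q\,dt = \beta(q+1,p+1) = \beta(p+1,q+1)$ by the symmetry of $\beta$. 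Combining the three displays and multiplying by $(b-a)^{p+q+1}$ yields exactly \eqref{1.4}.

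There is essentially no serious obstacle here; the argument is a direct substitution followed by a pointwise bound. The only points requiring a word of care are (i) checking that the hypotheses of Lemma \ref{le1.1} are met so that the change of variables is valid, and (ii) noting that the estimate under the integral does not require $\max\{f(a),f(b)\}\ge 0$ because the kernel $(1-t)^p t^q$ is nonnegative. I would also remark that this recovers Theorem \ref{th1.5}'s analogue of \eqref{1.3} in the degenerate sense that quasi-convexity is the weakest hypothesis among those considered, so no optimization over endpoints (as in the $\min$ appearing in \eqref{1.3}) is available or needed.
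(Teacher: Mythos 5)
Your proof is correct and is essentially the same argument the paper uses for its own quasi-convexity results in Section \ref{se3} (indeed, the case $l=1$ of Theorem \ref{th3.2} reduces to exactly your computation): apply Lemma \ref{le1.1}, bound $f(ta+(1-t)b)$ by $\max\{f(a),f(b)\}$ pointwise, and identify $\int_0^1(1-t)^pt^q\,dt=\beta(p+1,q+1)$. Your remarks on the nonnegativity of the kernel and the symmetry of the Beta function are accurate and complete the argument.
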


The aim of this paper is to establish some new integral inequalities like those given in Theorems \ref{th1.4} and   \ref{th1.5} for $(\alpha, m)-$convex functions (Section \ref{se2}) and quasi-convex functions (Section \ref{se3}), respectively. Our
results in special cases recapture Theorems \ref{th1.4} and   \ref{th1.5}, respectively. That is, this study
is a continuation and generalization of \cite{OSA}.

\section{New integral inequalities for $(\alpha, m)-$ convex functions}\label{se2}

\begin{theorem}\label{th2.1}
Let $f: [a,b]\rightarrow\mathbb{R}$ be continuous on $[a,b]$ such that $f \in L([a,b])$, $0\le a<b<\infty$. If $f$ is $(\alpha, m)-$convex on $[a,b]$,  for some fixed $(\alpha, m) \in \left( 0,1\right] ^{2}$ and $p, q > 0$, then
\begin{align}\label{2.1}
&\int_a^b (x-a)^p(b-x)^qf(x)dx\nonumber\\
\leq & (b-a)^{p+q+1}\min\left\{\beta(q+\alpha+1, p+1)f(a) +m[\beta(q+1, p+1)-\beta(q+\alpha+1, p+1)]f\left(\frac{b}{m}\right),\right.\nonumber\\
 & \left.\beta(q+1, p+\alpha+1)f(b)+m[\beta(p+1, q+1)-\beta(q+1, p+\alpha+1)]f\left(\frac{a}{m}\right)
\right\},
\end{align}
where $\beta (x, y)$ is the Euler Beta function.
\end{theorem}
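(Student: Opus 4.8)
The plan is to combine Lemma \ref{le1.1} with the defining inequality of $(\alpha,m)$-convexity (Definition \ref{d.1.2}). First I would invoke Lemma \ref{le1.1} to write
\[
\int_a^b (x-a)^p(b-x)^q f(x)\,dx = (b-a)^{p+q+1}\int_0^1 (1-t)^p t^q f\big(ta+(1-t)b\big)\,dt,
\]
so that everything reduces to estimating the one-variable integral $I := \int_0^1 (1-t)^p t^q f\big(ta+(1-t)b\big)\,dt$. The key observation is that the argument $ta+(1-t)b$ can be expressed in two different ways as an $(\alpha,m)$-convex combination, each producing one of the two competing bounds; since each is individually a valid upper bound for the same quantity, so is the smaller of the two, which is exactly the $\min$ appearing in \eqref{2.1}.

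For the first bound I would use the representation $ta+(1-t)b = t\,a + m(1-t)\,\frac{b}{m}$ and apply Definition \ref{d.1.2} with $x=a$, $y=b/m$, giving
\[
f\big(ta+(1-t)b\big)\le t^\alpha f(a) + m\big(1-t^\alpha\big) f\!\left(\tfrac{b}{m}\right).
\]
Multiplying by the nonnegative weight $(1-t)^p t^q$ and integrating over $[0,1]$, the integrals that appear are Euler Beta integrals: $\int_0^1 (1-t)^p t^{q+\alpha}\,dt = \beta(q+\alpha+1,p+1)$ and $\int_0^1 (1-t)^p t^q\,dt = \beta(q+1,p+1)$. Collecting terms yields
\[
I\le \beta(q+\alpha+1,p+1) f(a) + m\big[\beta(q+1,p+1)-\beta(q+\alpha+1,p+1)\big] f\!\left(\tfrac{b}{m}\right),
\]
which, after multiplying through by $(b-a)^{p+q+1}$, is the first entry of the minimum in \eqref{2.1}.

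For the second bound I would instead use the representation $ta+(1-t)b = (1-t)\,b + m t\,\frac{a}{m}$ and apply Definition \ref{d.1.2} with the roles reversed ($x=b$, $y=a/m$, and the factor $1-t$ playing the part of the parameter), obtaining
\[
f\big(ta+(1-t)b\big)\le (1-t)^\alpha f(b) + m\big(1-(1-t)^\alpha\big) f\!\left(\tfrac{a}{m}\right).
\]
The same multiply-and-integrate step now produces $\int_0^1 (1-t)^{p+\alpha} t^q\,dt = \beta(q+1,p+\alpha+1)$ together with $\int_0^1(1-t)^p t^q\,dt = \beta(q+1,p+1)=\beta(p+1,q+1)$, and hence
\[
I\le \beta(q+1,p+\alpha+1) f(b) + m\big[\beta(p+1,q+1)-\beta(q+1,p+\alpha+1)\big] f\!\left(\tfrac{a}{m}\right),
\]
the second entry of the minimum. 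Taking the smaller of the two estimates gives \eqref{2.1}.

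There is no serious obstacle here: the argument is essentially forced once Lemma \ref{le1.1} is in hand. The only thing requiring care is the Beta-function bookkeeping — keeping track of which slot carries the exponent shift $+\alpha$, and using the symmetry $\beta(x,y)=\beta(y,x)$ to cast the answer in exactly the stated form. As a sanity check one can set $\alpha=1$: then $\beta(q+1,p+1)-\beta(q+2,p+1)=\int_0^1(1-t)^{p+1}t^q\,dt=\beta(q+1,p+2)$, and likewise $\beta(p+1,q+1)-\beta(q+1,p+2)=\beta(q+2,p+1)$, so \eqref{2.1} collapses to \eqref{1.3} and Theorem \ref{th1.4} is recovered as claimed.
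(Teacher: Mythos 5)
Your proposal is correct and follows essentially the same route as the paper: Lemma \ref{le1.1} plus the $(\alpha,m)$-convexity estimate applied to the two representations $ta+m(1-t)\frac{b}{m}$ and (with the roles of the endpoints swapped) $\frac{a}{m}$, then Beta-function bookkeeping and taking the minimum. The only cosmetic difference is that for the second bound the paper re-invokes the lemma with the substitution $x=tb+(1-t)a$, whereas you keep the original parametrization and use $1-t$ as the convexity parameter; these are identical up to the change of variable $t\mapsto 1-t$.
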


\begin{proof}
Since $f$ is $(\alpha, m)-$convex on $[a,b]$, we know that for every $t\in [0,1]$
\begin{equation}
f(ta+(1-t)b)=f\left(ta+m(1-t)\frac{b}{m}\right)\leq  t^\alpha f(a)+m\left(1-t^\alpha\right)f\left(\frac{b}{m}\right). \label{2.2}
\end{equation}
Using Lemma \ref{le1.1}, with $x=ta+(1-t)b$, then we have
\begin{align*}
&\int_a^b (x-a)^p(b-x)^qf(x)dx\\
\leq & (b-a)^{p+q+1}\int_0^1 (1-t)^pt^q\left(t^\alpha f(a)+m\left(1-t^\alpha\right)f\left(\frac{b}{m}\right)\right)dt\\
= & (b-a)^{p+q+1}\left[f(a)\int_0^1 (1-t)^pt^{q+\alpha}dt +m f\left(\frac{b}{m}\right) \int_0^1 (1-t)^pt^{q}\left(1-t^\alpha\right)dt \right].
\end{align*}
Now, we will make use of the Beta function which is defined for $x, y > 0$ as
$$\beta (x, y)=\int_0^1 t^{x-1}(1-t)^{y-1}dt.$$
It is known that
$$\int_0^1t^{q+\alpha} (1-t)^pdt=\beta (q+\alpha+1, p+1),$$
\begin{align*}\int_0^1 (1-t)^pt^{q}\left(1-t^\alpha\right)dt=&\int_0^1 t^{q}(1-t)^pdt-\int_0^1 t^{q+\alpha}(1-t)^pdt\\
=&\beta(q+1, p+1)-\beta(q+\alpha+1, p+1)].\end{align*}
Combining all obtained equalities we get
\begin{align}\label{2.3}
&\int_a^b (x-a)^p(b-x)^qf(x)dx\nonumber\\
\leq & (b-a)^{p+q+1} \left\{\beta(q+\alpha+1, p+1)f(a)+m[\beta(q+1, p+1)-\beta(q+\alpha+1, p+1)]f\left(\frac{b}{m}\right)
\right\}.
\end{align}
If we choose $x = tb + (1-t)a$, analogously we obtain
\begin{align}\label{2.4}
&\int_a^b (x-a)^p(b-x)^qf(x)dx\nonumber\\
\leq & (b-a)^{p+q+1} \left\{\beta(q+1, p+\alpha+1)f(b) +m[\beta(q+1, p+1)-\beta(q+1, p+\alpha+1)]f\left(\frac{a}{m}\right)
\right\}.
\end{align}
Thus, by \eqref{2.3} and \eqref{2.4} we obtain \eqref{2.1}, which completes the
proof.
\end{proof}

\begin{remark}
\label{re1} As a special case of Theorem \ref{th2.1} for $\alpha =1,$ that is for $f$ be $m-$convex on $[a,b]$, we
recapture Theorem \ref{th1.4} due to the fact that
\begin{align*}
\beta(q+1, p+1)-\beta(q+2, p+1)=&\beta(q+1, p+1)-\frac{q+1}{p+q+2}\beta(q+1, p+1)\\
=&\frac{p+1}{p+q+2}\beta(q+1, p+1)=\beta(q+1, p+2)
\end{align*}
and
$$\beta(q+1, p+1)-\beta(q+1, p+\alpha+1)=\beta(q+2, p+1).$$
\end{remark}

\begin{corollary}
\label{co2.1} In Theorem \ref{th2.1}, if $p=q$, then \eqref{2.1} reduces to
\begin{align*}
&\int_a^b (x-a)^p(b-x)^pf(x)dx\nonumber\\
\leq & (b-a)^{2p+1}\min\left\{\beta(p+\alpha+1, p+1)f(a) +m[\beta(p+1, p+1)-\beta(p+\alpha+1, p+1)]f\left(\frac{b}{m}\right),\right.\nonumber\\
 & \left.\beta(p+1, p+\alpha+1)f(b)+m[\beta(p+1, p+1)-\beta(p+1, p+\alpha+1)]f\left(\frac{a}{m}\right)
\right\}.
\end{align*}
\end{corollary}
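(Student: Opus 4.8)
The plan is to observe that Corollary \ref{co2.1} is nothing more than the specialization of Theorem \ref{th2.1} to the case $p=q$, so the entire argument consists of a direct substitution. First I would invoke Theorem \ref{th2.1} verbatim: for $f$ continuous on $[a,b]$ with $f\in L([a,b])$, $0\le a<b<\infty$, $f$ being $(\alpha,m)-$convex with $(\alpha,m)\in(0,1]^2$ and $p,q>0$, inequality \eqref{2.1} holds. Then I would set $q=p$ throughout \eqref{2.1}: the left-hand side becomes $\int_a^b (x-a)^p(b-x)^p f(x)\,dx$, the prefactor $(b-a)^{p+q+1}$ becomes $(b-a)^{2p+1}$, and inside the two branches of the minimum one replaces $\beta(q+\alpha+1,p+1)$ by $\beta(p+\alpha+1,p+1)$, $\beta(q+1,p+1)$ by $\beta(p+1,p+1)$, $\beta(q+1,p+\alpha+1)$ by $\beta(p+1,p+\alpha+1)$, and $\beta(p+1,q+1)$ by $\beta(p+1,p+1)$. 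This yields exactly the displayed inequality.

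The only point that deserves a remark is that in the second branch of the minimum in \eqref{2.1} the term $\beta(p+1,q+1)$ appears (rather than $\beta(q+1,p+1)$), but by the symmetry $\beta(x,y)=\beta(y,x)$ of the Euler Beta function these are equal, so after the substitution $q=p$ the coefficient of $mf(a/m)$ is unambiguously $\beta(p+1,p+1)-\beta(p+1,p+\alpha+1)$, matching the statement. There is no genuine obstacle here: the hypotheses of Theorem \ref{th2.1} are inherited unchanged (one simply restricts the pair $(p,q)$ to the diagonal $p=q$, which is still an admissible choice since $p>0$), and no new estimate is required. Hence the proof is complete upon substitution.
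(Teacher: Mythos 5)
Your proposal is correct and matches the paper's (implicit) argument: the corollary is obtained by the direct substitution $q=p$ in \eqref{2.1}, and your observation that $\beta(p+1,q+1)=\beta(q+1,p+1)$ resolves the only notational wrinkle. Nothing further is needed.
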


\begin{theorem}\label{th2.2}
Let $f: [a,b]\rightarrow\mathbb{R}$ be continuous on $[a,b]$ such that $f \in L([a,b])$, $0\le a<b<\infty$ and let $k>1$. If $|f|^{\frac{k}{k-1}}$ is $(\alpha, m)-$convex on $[a,b]$,  for some fixed $(\alpha, m) \in \left( 0,1\right] ^{2}$ and $p, q > 0$, then
\begin{align}\label{2.5}
&\int_a^b (x-a)^p(b-x)^qf(x)dx\nonumber\\
\leq & \frac{(b-a)^{p+q+1}}{(\alpha+1)^\frac{k-1}{k}}\left[\beta(kp+1, kq+1)\right]^{\frac{1}{k}}\min\left\{\left[|f(a)|^\frac{k}{k-1}+\alpha m\left|f\left(\frac{b}{m}\right)\right|^\frac{k}{k-1}\right]^\frac{k-1}{k},\right.\nonumber\\
 & \left. \left[|f(b)|^\frac{k}{k-1}+\alpha m\left|f\left(\frac{a}{m}\right)\right|^\frac{k}{k-1}\right]^\frac{k-1}{k}
\right\}.
\end{align}
\end{theorem}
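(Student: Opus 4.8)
The plan is to follow the same template as the proof of Theorem~\ref{th2.1}, but to interpose H\"older's inequality before invoking $(\alpha,m)$-convexity, since now the convexity hypothesis is on $|f|^{k/(k-1)}$ rather than on $f$ itself. First I would apply Lemma~\ref{le1.1} to write
\begin{equation*}
\int_a^b (x-a)^p(b-x)^qf(x)dx=(b-a)^{p+q+1}\int_0^1 (1-t)^pt^qf(ta+(1-t)b)dt,
\end{equation*}
and then bound the right-hand side by
\begin{equation*}
(b-a)^{p+q+1}\int_0^1 (1-t)^pt^q\left|f(ta+(1-t)b)\right|dt.
\end{equation*}
Next I would split the weight $(1-t)^pt^q$ and apply H\"older's inequality with exponents $k$ and $k/(k-1)$, writing the integrand as $\big[(1-t)^pt^q\big]\cdot\big|f(ta+(1-t)b)\big|$ and distributing the full weight onto the first factor:
\begin{equation*}
\int_0^1 (1-t)^pt^q\left|f(ta+(1-t)b)\right|dt\le\left(\int_0^1 (1-t)^{kp}t^{kq}\,dt\right)^{\frac1k}\left(\int_0^1\left|f(ta+(1-t)b)\right|^{\frac{k}{k-1}}dt\right)^{\frac{k-1}{k}}.
\end{equation*}
The first factor is exactly $\big[\beta(kq+1,kp+1)\big]^{1/k}=\big[\beta(kp+1,kq+1)\big]^{1/k}$ by symmetry of the Beta function.

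For the second factor I would use that $|f|^{k/(k-1)}$ is $(\alpha,m)$-convex, so writing $ta+(1-t)b=ta+m(1-t)\frac bm$ gives
\begin{equation*}
\left|f(ta+(1-t)b)\right|^{\frac{k}{k-1}}\le t^\alpha|f(a)|^{\frac{k}{k-1}}+m(1-t^\alpha)\left|f\left(\tfrac bm\right)\right|^{\frac{k}{k-1}}.
\end{equation*}
Integrating over $[0,1]$ and using $\int_0^1 t^\alpha\,dt=\frac1{\alpha+1}$ and $\int_0^1(1-t^\alpha)\,dt=\frac{\alpha}{\alpha+1}$, the second factor is bounded by
\begin{equation*}
\left(\frac1{\alpha+1}|f(a)|^{\frac{k}{k-1}}+\frac{\alpha m}{\alpha+1}\left|f\left(\tfrac bm\right)\right|^{\frac{k}{k-1}}\right)^{\frac{k-1}{k}}=\frac1{(\alpha+1)^{\frac{k-1}{k}}}\left(|f(a)|^{\frac{k}{k-1}}+\alpha m\left|f\left(\tfrac bm\right)\right|^{\frac{k}{k-1}}\right)^{\frac{k-1}{k}}.
\end{equation*}
Combining these gives the first member of the minimum in \eqref{2.5}. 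Then, exactly as in Theorem~\ref{th2.1}, I would repeat the argument with the substitution $x=tb+(1-t)a$, writing $tb+(1-t)a=tb+m(1-t)\frac am$ and using $(\alpha,m)$-convexity of $|f|^{k/(k-1)}$ at the points $b$ and $a/m$; the H\"older factor $\big[\beta(kp+1,kq+1)\big]^{1/k}$ is unchanged (again by symmetry), and one obtains the second member of the minimum. Taking the smaller of the two bounds yields \eqref{2.5}.

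I do not anticipate a serious obstacle here; the argument is a routine combination of Lemma~\ref{le1.1}, H\"older's inequality, and the $(\alpha,m)$-convexity estimate. The one point requiring a little care is the bookkeeping of the Beta-function arguments: one must check that pulling the \emph{entire} polynomial weight $(1-t)^pt^q$ into the H\"older factor (rather than splitting it) is what produces the exponents $kp$ and $kq$, and that $\beta(kq+1,kp+1)=\beta(kp+1,kq+1)$ so the same constant appears in both members of the minimum. It is also worth remarking, as a follow-up, that choosing $k=p/(p-1)$-type relations or setting $\alpha=1$ should recover H\"older-type analogues of Theorem~\ref{th1.4}; but that belongs in a remark rather than the proof.
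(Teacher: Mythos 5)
Your proposal matches the paper's own proof essentially step for step: apply Lemma \ref{le1.1}, use H\"older's inequality with the full weight $(1-t)^pt^q$ carried into the factor with exponent $k$, bound the remaining factor via the $(\alpha,m)$-convexity of $|f|^{k/(k-1)}$ using $\int_0^1 t^\alpha\,dt=\frac{1}{\alpha+1}$, and then repeat with $x=tb+(1-t)a$ to get the second member of the minimum. The only (harmless) difference is that you explicitly insert the absolute value of $f$ before applying H\"older, a small point of care the paper glosses over.
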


\begin{proof}
Since $|f|^{\frac{k}{k-1}}$  is $(\alpha, m)-$convex on $[a,b]$ we know that for every $t\in [0,1]$
\begin{align*}
|f(ta+(1-t)b)|^{\frac{k}{k-1}}=&\left|f\left(ta+m(1-t)\frac{b}{m}\right)\right|^{\frac{k}{k-1}}\\
\leq & t^\alpha |f(a)|^{\frac{k}{k-1}}+m\left(1-t^\alpha\right)\left|f\left(\frac{b}{m}\right)\right|^{\frac{k}{k-1}}.
\end{align*}
Using Lemma \ref{le1.1}, with $x=ta+(1-t)b$, then we have
\begin{align*}
&\int_a^b (x-a)^p(b-x)^qf(x)dx\\
\leq & (b-a)^{p+q+1}\left[\int_0^1 (1-t)^{kp}t^{kq}dt\right]^\frac{1}{k}
\left[\int_0^1 |f(ta+(1-t)b)|^{\frac{k}{k-1}} dt\right]^\frac{k-1}{k} \\
\leq & (b-a)^{p+q+1}\left[\beta(kq+1, kp+1)\right]^{\frac{1}{k}} \left[ \int_0^1 t^\alpha |f(a)|^{\frac{k}{k-1}}dt
+m \int_0^1 \left(1-t^\alpha\right)\left|f\left(\frac{b}{m}\right)\right|^{\frac{k}{k-1}} dt \right]^\frac{k-1}{k}\\
= & (b-a)^{p+q+1}\left[\beta(kq+1, kp+1)\right]^{\frac{1}{k}} \left[  \frac{1}{\alpha+1} |f(a)|^{\frac{k}{k-1}}
+m \frac{\alpha}{\alpha+1} \left|f\left(\frac{b}{m}\right)\right|^{\frac{k}{k-1}}  \right]^\frac{k-1}{k}.
\end{align*}
If we choose $x = tb + (1-t)a$, analogously we obtain
\begin{align*}
&\int_a^b (x-a)^p(b-x)^qf(x)dx\\
\leq & (b-a)^{p+q+1}\left[\beta(kp+1, kq+1)\right]^{\frac{1}{k}} \left[  \frac{1}{\alpha+1} |f(b)|^{\frac{k}{k-1}}
+m \frac{\alpha}{\alpha+1} \left|f\left(\frac{a}{m}\right)\right|^{\frac{k}{k-1}}  \right]^\frac{k-1}{k},
\end{align*}
which completes the proof.
\end{proof}

\begin{corollary}
\label{co2.2} In Theorem \ref{th2.2}, if $p=q$, then \eqref{2.5} reduces to
\begin{align*}
&\int_a^b (x-a)^p(b-x)^pf(x)dx\nonumber\\
\leq & \frac{(b-a)^{2p+1}}{(\alpha+1)^\frac{k-1}{k}}\left[\beta(kp+1, kp+1)\right]^{\frac{1}{k}}\min\left\{\left[|f(a)|^\frac{k}{k-1}+\alpha m\left|f\left(\frac{b}{m}\right)\right|^\frac{k}{k-1}\right]^\frac{k-1}{k},\right.\nonumber\\
 & \left. \left[|f(b)|^\frac{k}{k-1}+\alpha m\left|f\left(\frac{a}{m}\right)\right|^\frac{k}{k-1}\right]^\frac{k-1}{k}
\right\}.
\end{align*}
\end{corollary}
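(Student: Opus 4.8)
The plan is to obtain Corollary~\ref{co2.2} as an immediate specialization of Theorem~\ref{th2.2}: no fresh argument is needed beyond substituting $q=p$ into the inequality \eqref{2.5} that has already been established. First I would recall that \eqref{2.5} holds for \emph{arbitrary} fixed $p,q>0$ under the stated hypotheses on $f$ and $(\alpha,m)$, and observe that the choice $q=p$ is admissible since it still satisfies $p,q>0$. Then I would simply set $q=p$ throughout and track how each factor on the right-hand side transforms.

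The bookkeeping is entirely routine. The power $(b-a)^{p+q+1}$ collapses to $(b-a)^{2p+1}$; the Beta-function factor $[\beta(kp+1,kq+1)]^{1/k}$ becomes $[\beta(kp+1,kp+1)]^{1/k}$; and the prefactor $(\alpha+1)^{-(k-1)/k}$ is untouched, since it involves neither $p$ nor $q$. The decisive observation is that the two bracketed minimands,
\[
\left[|f(a)|^{\frac{k}{k-1}}+\alpha m\left|f\left(\tfrac{b}{m}\right)\right|^{\frac{k}{k-1}}\right]^{\frac{k-1}{k}}
\quad\text{and}\quad
\left[|f(b)|^{\frac{k}{k-1}}+\alpha m\left|f\left(\tfrac{a}{m}\right)\right|^{\frac{k}{k-1}}\right]^{\frac{k-1}{k}},
\]
carry no dependence on $p$ or $q$ whatsoever: the exponents $\frac{k}{k-1}$ and $\frac{k-1}{k}$ are governed by $k$ alone. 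Hence the entire $\min\{\cdot,\cdot\}$ expression passes through the specialization unchanged, and assembling these pieces reproduces verbatim the asserted inequality.

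Because every step is a literal substitution into an inequality already proven in Theorem~\ref{th2.2}, there is no genuine obstacle here; the closest thing to a point worth checking is merely that $q=p>0$ respects the hypotheses of that theorem, which it plainly does. I would therefore present the proof in a single sentence invoking Theorem~\ref{th2.2} with $q=p$, leaving the symbolic simplifications above as the explanation for how \eqref{2.5} reduces to the displayed form.
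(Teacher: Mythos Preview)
Your proposal is correct and matches the paper's approach: the corollary is stated without proof because it is an immediate specialization of Theorem~\ref{th2.2} obtained by setting $q=p$, exactly as you describe.
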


\begin{corollary}
\label{co2.3} In Theorem \ref{th2.2}, if $\alpha =1,$ i.e., if $|f|^{\frac{k}{k-1}}$ is $m-$convex on $[a,b]$, then \eqref{2.5} reduces to
\begin{align*}
&\int_a^b (x-a)^p(b-x)^qf(x)dx\nonumber\\
\leq & \frac{(b-a)^{p+q+1}}{2^\frac{k-1}{k}}\left[\beta(kp+1, kq+1)\right]^{\frac{1}{k}}\min\left\{\left[|f(a)|^\frac{k}{k-1}+ m\left|f\left(\frac{b}{m}\right)\right|^\frac{k}{k-1}\right]^\frac{k-1}{k},\right.\nonumber\\
 & \left. \left[|f(b)|^\frac{k}{k-1}+ m\left|f\left(\frac{a}{m}\right)\right|^\frac{k}{k-1}\right]^\frac{k-1}{k}
\right\}.
\end{align*}
\end{corollary}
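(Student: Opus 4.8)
The plan is to derive this corollary directly from Theorem \ref{th2.2} by specializing the parameter $\alpha$ to the value $1$, with no fresh computation required. First I would invoke the remark following Definition \ref{d.1.2}: for $(\alpha,m)=(1,m)$ the notion of $(\alpha,m)$-convexity coincides with that of $m$-convexity. Hence, if $|f|^{k/(k-1)}$ is $m$-convex on $[a,b]$ for some fixed $m\in(0,1]$, then it is $(\alpha,m)$-convex on $[a,b]$ for the admissible pair $(\alpha,m)=(1,m)\in(0,1]^2$, so all hypotheses of Theorem \ref{th2.2} are satisfied with this choice of $(\alpha,m)$.

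Next I would substitute $\alpha=1$ into the conclusion \eqref{2.5}. The prefactor $(\alpha+1)^{(k-1)/k}$ becomes $2^{(k-1)/k}$; the factor $\left[\beta(kp+1,kq+1)\right]^{1/k}$ is unchanged; and in each branch of the minimum the coefficient $\alpha m$ multiplying $\left|f\left(\frac{b}{m}\right)\right|^{k/(k-1)}$, respectively $\left|f\left(\frac{a}{m}\right)\right|^{k/(k-1)}$, collapses to $m$. Collecting these substitutions reproduces verbatim the asserted inequality, which completes the argument.

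There is essentially no obstacle here, since the step is merely the evaluation of Theorem \ref{th2.2} at a boundary value of $\alpha$; the only points to verify are that $(\alpha,m)=(1,m)$ is an admissible parameter pair for every $m\in(0,1]$ and that $m$-convexity is precisely the $\alpha=1$ instance of $(\alpha,m)$-convexity, both of which are already recorded in the introduction. If one preferred a self-contained argument, one could instead repeat the proof of Theorem \ref{th2.2} line by line with $\alpha=1$: apply H\"older's inequality together with Lemma \ref{le1.1} to the $m$-convexity bound $|f(ta+(1-t)b)|^{k/(k-1)}\le t|f(a)|^{k/(k-1)}+m(1-t)\left|f\left(\frac{b}{m}\right)\right|^{k/(k-1)}$, use $\int_0^1 t\,dt=\int_0^1(1-t)\,dt=1/2$ in place of $\int_0^1 t^\alpha\,dt=1/(\alpha+1)$, and then symmetrize by interchanging the roles of $a$ and $b$. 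This retraces the same estimate without adding anything new, so the specialization route is the one I would present.
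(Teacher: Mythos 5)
Your proposal is correct and is exactly what the paper intends: the corollary is just Theorem \ref{th2.2} evaluated at $\alpha=1$ (using that $(1,m)$-convexity is $m$-convexity), so the prefactor $(\alpha+1)^{\frac{k-1}{k}}$ becomes $2^{\frac{k-1}{k}}$ and each coefficient $\alpha m$ becomes $m$. The paper offers no separate proof for this corollary, and none is needed beyond the substitution you describe.
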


\begin{remark}
\label{re2} As a special case of Corollary \ref{co2.3} for $m =1,$ that is for $|f|^{\frac{k}{k-1}}$ be convex on $[a,b]$, we
get
\begin{align*}
 \int_a^b (x-a)^p(b-x)^qf(x)dx 
\leq   \frac{(b-a)^{p+q+1}}{2^\frac{k-1}{k}}\left[\beta(kp+1, kq+1)\right]^{\frac{1}{k}} \left[|f(a)|^\frac{k}{k-1}+ \left|f\left(b\right)\right|^\frac{k}{k-1}\right]^\frac{k-1}{k}.
\end{align*}
\end{remark}

\begin{theorem}\label{th2.3}
Let $f: [a,b]\rightarrow\mathbb{R}$ be continuous on $[a,b]$ such that $f \in L([a,b])$, $0\le a<b<\infty$ and let $l\ge 1$. If $|f|^{l}$ is $(\alpha, m)-$convex on $[a,b]$,  for some fixed $(\alpha, m) \in \left( 0,1\right] ^{2}$ and $p, q > 0$, then
\begin{align}\label{2.6}
&\int_a^b (x-a)^p(b-x)^qf(x)dx\nonumber\\
\leq & (b-a)^{p+q+1}\left[\beta(p+1, q+1)\right]^{\frac{l-1}{l}}\nonumber\\
&\times\min\left\{\left[ \beta(q+\alpha+1, p+1)|f(a)|^l +m[\beta(q+1, p+1)-\beta(q+\alpha+1, p+1)]\left|f\left(\frac{b}{m}\right) \right|^l \right]^\frac{1}{l},\right.\nonumber\\
 & \left. \left[ \beta(q+1, p+\alpha+1)|f(b)|^l +m[\beta(q+1, p+1)-\beta(q+1, p+\alpha+1)]\left|f\left(\frac{a}{m}\right) \right|^l \right]^\frac{1}{l}
\right\}.
\end{align}
\end{theorem}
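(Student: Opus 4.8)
The plan is to combine the integral identity of Lemma~\ref{le1.1}, the weighted power--mean (Hölder) inequality adapted to the exponent $l\ge 1$, and the $(\alpha,m)$--convexity of $|f|^{l}$, following exactly the two--substitution scheme already used in the proofs of Theorems~\ref{th2.1} and \ref{th2.2}.

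First I would pass to absolute values and apply Lemma~\ref{le1.1} with $x=ta+(1-t)b$ to get
\[
\int_a^b (x-a)^p(b-x)^q f(x)\,dx \le (b-a)^{p+q+1}\int_0^1 (1-t)^p t^q\,\bigl|f(ta+(1-t)b)\bigr|\,dt .
\]
Next, for $l\ge 1$ I would invoke the power--mean inequality with weight $w(t)=(1-t)^p t^q$: writing $w\,|f| = w^{1-1/l}\cdot w^{1/l}|f|$ and using Hölder with exponents $l/(l-1)$ and $l$ gives
\[
\int_0^1 w(t)\,|f(ta+(1-t)b)|\,dt \le \Bigl(\int_0^1 w(t)\,dt\Bigr)^{\frac{l-1}{l}}\Bigl(\int_0^1 w(t)\,|f(ta+(1-t)b)|^{l}\,dt\Bigr)^{\frac1l},
\]
and since $\int_0^1 (1-t)^p t^q\,dt=\beta(p+1,q+1)$ this produces the factor $[\beta(p+1,q+1)]^{(l-1)/l}$. (For $l=1$ this step is an equality and the whole argument collapses onto that of Theorem~\ref{th2.1}.) I would then insert the $(\alpha,m)$--convexity estimate
\[
|f(ta+(1-t)b)|^{l}=\bigl|f\bigl(ta+m(1-t)\tfrac{b}{m}\bigr)\bigr|^{l}\le t^{\alpha}|f(a)|^{l}+m(1-t^{\alpha})\bigl|f(\tfrac{b}{m})\bigr|^{l}
\]
into the last integral and evaluate, using the Beta--function identities already recorded in the proof of Theorem~\ref{th2.1},
\[
\int_0^1 (1-t)^p t^{q+\alpha}\,dt=\beta(q+\alpha+1,p+1),\qquad \int_0^1 (1-t)^p t^{q}(1-t^{\alpha})\,dt=\beta(q+1,p+1)-\beta(q+\alpha+1,p+1),
\]
which yields the first entry of the minimum in \eqref{2.6}. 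Repeating the entire argument with the substitution $x=tb+(1-t)a$ (so the weight is $t^p(1-t)^q$, again with integral $\beta(p+1,q+1)$, and $|f(tb+(1-t)a)|^{l}\le t^{\alpha}|f(b)|^{l}+m(1-t^{\alpha})|f(\tfrac{a}{m})|^{l}$) produces the second entry, and taking the smaller of the two bounds gives \eqref{2.6}.

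I do not expect a genuine obstacle here; the argument is a clean synthesis of the previous two proofs. The only points deserving care are the bookkeeping of the Beta--function arguments after the two substitutions swap the roles of $a$ and $b$ (reconciled by the symmetry $\beta(x,y)=\beta(y,x)$), and checking that the power--mean step is legitimate, which holds because $w$ is nonnegative and integrable on $[0,1]$ and $|f|^{l}$ is integrable, being dominated via $(\alpha,m)$--convexity by an affine function of $t^{\alpha}$.
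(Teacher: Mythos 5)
Your proposal is correct and follows essentially the same route as the paper's own proof: Lemma~\ref{le1.1}, the splitting $w=w^{(l-1)/l}w^{1/l}$ with H\"older's inequality (power mean) for the weight $(1-t)^pt^q$, the $(\alpha,m)$-convexity bound on $|f|^{l}$, the Beta-function evaluations, and the second substitution $x=tb+(1-t)a$ to obtain the other entry of the minimum. The bookkeeping you flag (symmetry of $\beta$ after swapping $a$ and $b$) is handled exactly as you describe.
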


\begin{proof}
Since $|f|^{l}$  is $(\alpha, m)-$convex on $[a,b]$, we know that for every $t\in [0,1]$
\begin{align*}
|f(ta+(1-t)b)|^{l}= \left|f\left(ta+m(1-t)\frac{b}{m}\right)\right|^{l}
\leq   t^\alpha |f(a)|^{l}+m\left(1-t^\alpha\right)\left|f\left(\frac{b}{m}\right)\right|^{l}.
\end{align*}
Using Lemma \ref{le1.1}, with $x=ta+(1-t)b$, then we have
\begin{align*}
&\int_a^b (x-a)^p(b-x)^qf(x)dx\\
=&(b-a)^{p+q+1} \int_0^1 \left[(1-t)^pt^q\right]^\frac{l-1}{l} \left[(1-t)^pt^q\right]^\frac{1}{l} f(ta+(1-t)b)dt\\
\leq & (b-a)^{p+q+1}\left[\int_0^1 (1-t)^{p}t^{q}dt\right]^\frac{l-1}{l}
\left[\int_0^1 (1-t)^{p}t^{q} |f(ta+(1-t)b)|^{l} dt\right]^\frac{1}{l} \\
\leq & (b-a)^{p+q+1}\left[\beta(q+1, p+1)\right]^{\frac{l-1}{l}} \\&
\times\left[ \beta(q+\alpha+1, p+1)|f(a)|^l +m[\beta(q+1, p+1)-\beta(q+\alpha+1, p+1)]\left|f\left(\frac{b}{m}\right) \right|^l \right]^\frac{1}{l}.
\end{align*}
If we choose $x = tb + (1-t)a$, analogously we obtain
\begin{align*}
&\int_a^b (x-a)^p(b-x)^qf(x)dx\\
\leq & (b-a)^{p+q+1}\left[\beta(p+1, q+1)\right]^{\frac{l-1}{l}} \\&
\times\left[ \beta(q+1, p+\alpha+1)|f(b)|^l +m[\beta(q+1, p+1)-\beta(q+1, p+\alpha+1)]\left|f\left(\frac{a}{m}\right) \right|^l \right]^\frac{1}{l},
\end{align*}
which completes the proof.
\end{proof}

\begin{corollary}
\label{co2.2} In Theorem \ref{th2.3}, if $p=q$, then \eqref{2.6} reduces to
\begin{align*}
&\int_a^b (x-a)^p(b-x)^pf(x)dx\nonumber\\
\leq & (b-a)^{2p+1}\left[\beta(p+1, p+1)\right]^{\frac{l-1}{l}}\nonumber\\
&\times\min\left\{\left[ \beta(p+\alpha+1, p+1)|f(a)|^l +m[\beta(p+1, p+1)-\beta(p+\alpha+1, p+1)]\left|f\left(\frac{b}{m}\right) \right|^l \right]^\frac{1}{l},\right.\nonumber\\
 & \left. \left[ \beta(p+1, p+\alpha+1)|f(b)|^l +m[\beta(p+1, p+1)-\beta(p+1, p+\alpha+1)]\left|f\left(\frac{a}{m}\right) \right|^l \right]^\frac{1}{l}
\right\}.
\end{align*}
\end{corollary}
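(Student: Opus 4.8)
The plan is to obtain this corollary as an immediate specialization of Theorem \ref{th2.3}, requiring no new computation. The hypotheses of Theorem \ref{th2.3} ask only that $p,q>0$ (together with $l\ge 1$, $(\alpha,m)\in(0,1]^2$, and $|f|^{l}$ being $(\alpha,m)$-convex on $[a,b]$); since $p>0$, the choice $q=p$ is legitimate and imposes nothing extra, so the conclusion \eqref{2.6} applies verbatim with $q$ replaced by $p$.

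The single step is then to carry out that substitution in \eqref{2.6} and collect terms: the prefactor $(b-a)^{p+q+1}$ becomes $(b-a)^{2p+1}$; the outer factor $\left[\beta(p+1,q+1)\right]^{\frac{l-1}{l}}$ becomes $\left[\beta(p+1,p+1)\right]^{\frac{l-1}{l}}$; and in each of the two brackets inside the $\min$, the coefficients $\beta(q+\alpha+1,p+1)$, $\beta(q+1,p+1)$, $\beta(q+1,p+\alpha+1)$ turn into $\beta(p+\alpha+1,p+1)$, $\beta(p+1,p+1)$, $\beta(p+1,p+\alpha+1)$ respectively (here one uses the symmetry $\beta(p+1,q+1)=\beta(q+1,p+1)$, so that after setting $q=p$ the leading factor and the ``diagonal'' Beta terms all coincide as $\beta(p+1,p+1)$). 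Reading off the result reproduces exactly the displayed inequality.

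Since the argument is a direct substitution, there is no genuine obstacle; at most one should observe that the relation $\beta(x,y)=\beta(y,x)$ is what makes the reduction clean. Equivalently, one could simply re-run the proof of Theorem \ref{th2.3} with $q=p$ from the outset, which yields nothing new.
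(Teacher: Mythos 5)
Your proposal is correct and matches the paper's (implicit) argument exactly: the corollary is nothing more than the substitution $q=p$ in \eqref{2.6}, with the symmetry $\beta(x,y)=\beta(y,x)$ used to write all diagonal terms as $\beta(p+1,p+1)$. The paper offers no separate proof, treating it as the same immediate specialization you describe.
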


\begin{corollary}
\label{co2.5} In Theorem \ref{th2.3}, if $\alpha =1,$ i.e., if $|f|^{l}$ is $m-$convex on $[a,b]$, then \eqref{2.6} reduces to
\begin{align*}
&\int_a^b (x-a)^p(b-x)^qf(x)dx\nonumber\\
\leq & (b-a)^{p+q+1}\left[\beta(p+1, q+1)\right]^{\frac{l-1}{l}}\min\left\{\left[ \beta(q+2, p+1)|f(a)|^l +m\beta(q+1, p+2)\left|f\left(\frac{b}{m}\right) \right|^l \right]^\frac{1}{l},\right.\nonumber\\
 & \left. \left[ \beta(q+1, p+2)|f(b)|^l +m\beta(q+2, p+1)\left|f\left(\frac{a}{m}\right) \right|^l \right]^\frac{1}{l}
\right\}.
\end{align*}
\end{corollary}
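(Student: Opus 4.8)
The plan is to obtain this as an immediate specialization of Theorem \ref{th2.3}. First I would recall, from the remarks following Definition \ref{d.1.2}, that for $\alpha=1$ the notion of $(\alpha,m)$-convexity coincides with $m$-convexity; hence the hypothesis that $|f|^{l}$ be $m$-convex on $[a,b]$ is exactly the hypothesis of Theorem \ref{th2.3} with $\alpha=1$, and inequality \eqref{2.6} holds with that value of $\alpha$. (Equivalently, one could re-run the proof of Theorem \ref{th2.3} verbatim, starting from Lemma \ref{le1.1} and applying H\"older's inequality to the splitting $(1-t)^{p}t^{q}=\big[(1-t)^{p}t^{q}\big]^{\frac{l-1}{l}}\big[(1-t)^{p}t^{q}\big]^{\frac{1}{l}}$, using now $|f(ta+(1-t)b)|^{l}\le t\,|f(a)|^{l}+m(1-t)\,|f(b/m)|^{l}$; but the specialization route is shorter.)

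It then remains only to simplify the Euler Beta coefficients in \eqref{2.6} at $\alpha=1$. Two of the four are trivial substitutions: $\beta(q+\alpha+1,p+1)=\beta(q+2,p+1)$ and $\beta(q+1,p+\alpha+1)=\beta(q+1,p+2)$. For the other two I would invoke the identities already recorded in Remark \ref{re1}, namely $\beta(q+1,p+1)-\beta(q+2,p+1)=\beta(q+1,p+2)$ and $\beta(q+1,p+1)-\beta(q+1,p+2)=\beta(q+2,p+1)$, both of which come from the recurrence $\beta(x+1,y)=\tfrac{x}{x+y}\beta(x,y)$ for the Beta function. The outer prefactor $\big[\beta(p+1,q+1)\big]^{\frac{l-1}{l}}$ is unaffected by the value of $\alpha$.

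Substituting these four evaluations into the first entry of the minimum in \eqref{2.6} turns the bracket into $\beta(q+2,p+1)|f(a)|^{l}+m\,\beta(q+1,p+2)\,|f(b/m)|^{l}$, and into the second entry it gives $\beta(q+1,p+2)|f(b)|^{l}+m\,\beta(q+2,p+1)\,|f(a/m)|^{l}$; raising each to the power $1/l$ and keeping the minimum yields precisely the asserted inequality. I do not expect any genuine obstacle: the entire content is inherited from Theorem \ref{th2.3}, and the only computation—the Beta-function bookkeeping—has already been carried out in Remark \ref{re1}.
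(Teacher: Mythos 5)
Your proposal is correct and matches the paper's (implicit) treatment: the corollary is obtained by setting $\alpha=1$ in \eqref{2.6} and simplifying the Beta coefficients via the identities $\beta(q+1,p+1)-\beta(q+2,p+1)=\beta(q+1,p+2)$ and $\beta(q+1,p+1)-\beta(q+1,p+2)=\beta(q+2,p+1)$, exactly as recorded in Remark \ref{re1}. The paper offers no separate proof beyond this specialization, so there is nothing further to compare.
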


\begin{remark}
\label{re3} As a special case of Corollary \ref{co2.5} for $m =1,$ that is for $|f|^{l}$ be convex on $[a,b]$, we
get
\begin{align*}
&\int_a^b (x-a)^p(b-x)^qf(x)dx\nonumber\\
\leq & (b-a)^{p+q+1}\left[\beta(p+1, q+1)\right]^{\frac{l-1}{l}}\left[ \beta(q+2, p+1)|f(a)|^l + \beta(q+1, p+2)\left|f\left(b\right) \right|^l \right]^\frac{1}{l}.
\end{align*}
\end{remark}

\section{New integral inequalities for quasi-convex functions}\label{se3}

\begin{theorem}\label{th3.1}
Let $f: [a,b]\rightarrow\mathbb{R}$ be continuous on $[a,b]$ such that $f \in L([a,b])$, $0\le a<b<\infty$ and let $k>1$. If $|f|^{\frac{k}{k-1}}$ is quasi-convex on $[a,b]$,  for some fixed  $p, q > 0$, then
\begin{align}\label{3.1}
 \int_a^b (x-a)^p(b-x)^qf(x)dx 
\leq    (b-a)^{p+q+1} \left[\beta(kp+1, kq+1)\right]^{\frac{1}{k}}\left( \max \left\{ \left| f(a)\right| ^{\frac{k}{k-1}},
\left| f(b)\right| ^{\frac{k}{k-1}}\right\} \right) ^{\frac{k-1}{k}}.
\end{align}
\end{theorem}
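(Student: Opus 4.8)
The plan is to combine Lemma \ref{le1.1} with H\"{o}lder's inequality, exactly as in the proof of Theorem \ref{th2.2}, but now invoking quasi-convexity in place of $(\alpha,m)$-convexity at the last step. First I would apply Lemma \ref{le1.1} with the substitution $x = ta + (1-t)b$ to rewrite
\begin{equation*}
\int_a^b (x-a)^p(b-x)^q f(x)\,dx = (b-a)^{p+q+1}\int_0^1 (1-t)^p t^q f(ta+(1-t)b)\,dt.
\end{equation*}
Then I would split the weight $(1-t)^p t^q$ as $\bigl[(1-t)^p t^q\bigr]^{1} = $ a product suited to H\"{o}lder with exponents $k$ and $\tfrac{k}{k-1}$: more precisely, write the integrand as $\bigl((1-t)^p t^q\bigr)\cdot f(ta+(1-t)b)$ and bound
\begin{equation*}
\int_0^1 (1-t)^p t^q f(ta+(1-t)b)\,dt \le \left(\int_0^1 (1-t)^{kp} t^{kq}\,dt\right)^{\frac1k}\left(\int_0^1 |f(ta+(1-t)b)|^{\frac{k}{k-1}}\,dt\right)^{\frac{k-1}{k}},
\end{equation*}
since $\tfrac{1}{k}+\tfrac{k-1}{k}=1$. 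The first factor is $\bigl[\beta(kq+1,kp+1)\bigr]^{1/k} = \bigl[\beta(kp+1,kq+1)\bigr]^{1/k}$ by the definition of the Beta function and its symmetry.

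Next I would use quasi-convexity of $|f|^{k/(k-1)}$: for every $t\in[0,1]$, since $ta+(1-t)b$ is a convex combination of $a$ and $b$,
\begin{equation*}
|f(ta+(1-t)b)|^{\frac{k}{k-1}} \le \max\left\{|f(a)|^{\frac{k}{k-1}},|f(b)|^{\frac{k}{k-1}}\right\}.
\end{equation*}
Integrating this constant bound over $[0,1]$ gives $\int_0^1 |f(ta+(1-t)b)|^{k/(k-1)}\,dt \le \max\{|f(a)|^{k/(k-1)},|f(b)|^{k/(k-1)}\}$, and raising to the power $\tfrac{k-1}{k}$ yields the claimed factor $\bigl(\max\{|f(a)|^{k/(k-1)},|f(b)|^{k/(k-1)}\}\bigr)^{(k-1)/k}$. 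Multiplying the three pieces together with $(b-a)^{p+q+1}$ produces exactly \eqref{3.1}. Note that, unlike the $(\alpha,m)$-convex case, there is no asymmetry between the roles of $a$ and $b$ here, so the alternative substitution $x = tb + (1-t)a$ gives the same bound and no $\min$ appears.

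I do not expect any genuine obstacle: the argument is a routine H\"{o}lder estimate, and the only points requiring care are (i) checking that the continuity and integrability hypotheses on $f$ justify applying Lemma \ref{le1.1} and H\"{o}lder's inequality on $[0,1]$ (they do, since $f$ continuous on $[a,b]$ makes $t\mapsto f(ta+(1-t)b)$ continuous hence bounded and measurable on $[0,1]$), and (ii) bookkeeping the Beta-function arguments and the symmetry $\beta(x,y)=\beta(y,x)$ so that the final constant reads $\beta(kp+1,kq+1)$ as stated. For completeness one would also remark that taking $k\to 1^+$ formally, or comparing with Theorem \ref{th1.5}, shows this result is consistent with the earlier quasi-convex bound of \"{O}zdemir et al.
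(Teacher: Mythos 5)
Your proposal is correct and follows essentially the same route as the paper's own proof: Lemma \ref{le1.1}, H\"{o}lder's inequality with exponents $k$ and $\tfrac{k}{k-1}$, and then the pointwise quasi-convexity bound $|f(ta+(1-t)b)|^{k/(k-1)}\le\max\{|f(a)|^{k/(k-1)},|f(b)|^{k/(k-1)}\}$, with the Beta-function symmetry handling the constant. Your added remark that no $\min$ arises because the quasi-convexity bound is symmetric in $a$ and $b$ is a correct observation that the paper leaves implicit.
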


\begin{proof}
By Lemma \ref{le1.1}, H\"older's inequality, the definition of Beta function and the fact that $|f|^{\frac{k}{k-1}}$ is quasi-convex on $[a,b]$, we have
\begin{align*}
&\int_a^b (x-a)^p(b-x)^qf(x)dx\\
\leq & (b-a)^{p+q+1}\left[\int_0^1 (1-t)^{kp}t^{kq}dt\right]^\frac{1}{k}
\left[\int_0^1 |f(ta+(1-t)b)|^{\frac{k}{k-1}} dt\right]^\frac{k-1}{k} \\
\leq & (b-a)^{p+q+1}\left[\beta(kq+1, kp+1)\right]^{\frac{1}{k}} \left[\int_0^1  \max \left\{ \left| f(a)\right| ^{\frac{k}{k-1}},
\left| f(b)\right| ^{\frac{k}{k-1}}\right\}dt \right]^\frac{k-1}{k}\\
= & (b-a)^{p+q+1}\left[\beta(kq+1, kp+1)\right]^{\frac{1}{k}} \left[ \max \left\{ \left| f(a)\right| ^{\frac{k}{k-1}},
\left| f(b)\right| ^{\frac{k}{k-1}}\right\}  \right]^\frac{k-1}{k},
\end{align*}
which completes the proof.
\end{proof}

\begin{corollary}
\label{co3.1} Let $f$ be as in Theorem \ref{th3.1}. Additionally, if

$(1)$ $f$ is increasing, then we have
\begin{align*}
 \int_a^b (x-a)^p(b-x)^qf(x)dx
\leq    (b-a)^{p+q+1} \left[\beta(kp+1, kq+1)\right]^{\frac{1}{k}}f(b).
\end{align*}

$(2)$ $f$ is decreasing, then we have
\begin{align*}
 \int_a^b (x-a)^p(b-x)^qf(x)dx
\leq  (b-a)^{p+q+1} \left[\beta(kp+1, kq+1)\right]^{\frac{1}{k}}f(a).
\end{align*}
\end{corollary}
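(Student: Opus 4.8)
The plan is to derive Corollary \ref{co3.1} directly from Theorem \ref{th3.1}, exploiting that monotonicity of $f$ pins down which of $|f(a)|$ and $|f(b)|$ realizes the maximum. The only nontrivial point is that Theorem \ref{th3.1} is stated in terms of $|f|$, whereas the conclusion of the corollary is stated in terms of $f$ itself (namely $f(b)$ or $f(a)$, not $|f(b)|$ or $|f(a)|$). Hence I first need to note that under the hypotheses the relevant endpoint value is nonnegative, so that $|f(b)| = f(b)$ in case $(1)$ and $|f(a)| = f(a)$ in case $(2)$; this is automatic because $|f|^{k/(k-1)}$ being quasi-convex forces nothing about the sign of $f$, so strictly one should observe that if $f$ is increasing with $f(b) < 0$ then $f < 0$ on $[a,b]$ and the left-hand integral is negative, making the inequality trivial; the substantive case is $f(b) \ge 0$, in which $\max\{|f(a)|^{k/(k-1)}, |f(b)|^{k/(k-1)}\}^{(k-1)/k} \le |f(b)| = f(b)$ once we also know $|f(a)| \le |f(b)|$.

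First I would invoke Theorem \ref{th3.1}, which gives
\begin{equation*}
\int_a^b (x-a)^p(b-x)^q f(x)\,dx \leq (b-a)^{p+q+1}\left[\beta(kp+1, kq+1)\right]^{\frac{1}{k}}\left(\max\left\{|f(a)|^{\frac{k}{k-1}}, |f(b)|^{\frac{k}{k-1}}\right\}\right)^{\frac{k-1}{k}}.
\end{equation*}
For part $(1)$, $f$ increasing on $[a,b]$ yields $f(a) \le f(x) \le f(b)$ for all $x \in [a,b]$; in the nontrivial case $f(b) \ge 0$ one has, for every $x$, $|f(x)| \le \max\{|f(a)|, f(b)\} = f(b)$ when additionally $f(a) \ge -f(b)$, and in general the endpoint bound $\max\{|f(a)|, |f(b)|\}$ is what Theorem \ref{th3.1} already provides, so it remains only to replace this by $f(b)$. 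The cleanest route is to observe that when $f$ is increasing the quasi-convexity hypothesis is actually superfluous for the purpose at hand: $f(ta+(1-t)b) \le f(b)$ directly for $t \in [0,1]$, so one can rerun the short argument of Theorem \ref{th3.1} replacing the line ``$\max\{|f(a)|^{k/(k-1)}, |f(b)|^{k/(k-1)}\}$'' by ``$|f(b)|^{k/(k-1)}$'' (or by ``$(f(b))^{k/(k-1)}$'' when $f(b)\ge 0$), which is legitimate precisely because $(1-t)^p t^q \ge 0$. Part $(2)$ is symmetric: $f$ decreasing gives $f(ta+(1-t)b) \le f(a)$, and the same substitution yields $f(a)$ in place of the maximum.

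Concretely, I would write: \emph{Proof.} Apply Theorem \ref{th3.1}. In case $(1)$, since $f$ is increasing we have $f(ta+(1-t)b) \le f(b)$ for all $t \in [0,1]$, whence $\max\{|f(a)|^{k/(k-1)}, |f(b)|^{k/(k-1)}\} \le |f(b)|^{k/(k-1)}$; substituting into \eqref{3.1} and noting $(|f(b)|^{k/(k-1)})^{(k-1)/k} = |f(b)|$, together with $|f(b)| = f(b)$ in the only case where the bound is not already trivial, gives the claimed inequality. Case $(2)$ follows by the same reasoning with $a$ and $b$ interchanged. \qedhere — the main (and really the only) obstacle is the cosmetic mismatch between $|f|$ in Theorem \ref{th3.1} and $f$ in the corollary, which is why one must either tacitly assume the relevant endpoint value is nonnegative or remark that otherwise the left-hand side is $\le 0$ and the inequality holds vacuously; I would flag this explicitly rather than leave it to the reader.
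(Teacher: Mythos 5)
You correctly spotted that the only real content here is passing from $\max\{|f(a)|^{k/(k-1)},|f(b)|^{k/(k-1)}\}$ in Theorem \ref{th3.1} to the signed endpoint value $f(b)$ (resp.\ $f(a)$), and your main line of argument --- monotonicity pins the maximum at one endpoint, then drop the absolute values --- is exactly the (unstated) argument the paper intends. But your patch for the sign problem does not work, and this is a genuine gap, not a cosmetic one. First, ``$f$ increasing, hence $\max\{|f(a)|^{k/(k-1)},|f(b)|^{k/(k-1)}\}\le |f(b)|^{k/(k-1)}$'' is a non sequitur: increasing gives $f(a)\le f(b)$, not $|f(a)|\le|f(b)|$ (take $f(a)=-10$, $f(b)=1$). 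Second, and more seriously, your fallback claim that ``if $f(b)<0$ then the left-hand integral is negative, making the inequality trivial'' is false: when $f(b)<0$ the \emph{right}-hand side is negative as well, so a negative left-hand side proves nothing. In fact the corollary as stated simply fails without a nonnegativity hypothesis. Take $[a,b]=[0,1]$, $p=q=1$, $k=2$, $f(x)=x-10$; then $f$ is continuous and increasing and $|f|^2=(x-10)^2$ is monotone on $[0,1]$, hence quasi-convex, yet
\begin{equation*}
\int_0^1 x(1-x)(x-10)\,dx=-\tfrac{19}{12}\approx-1.583,
\qquad
\left[\beta(3,3)\right]^{1/2}f(1)=-\tfrac{9}{\sqrt{30}}\approx-1.643,
\end{equation*}
so the asserted inequality $\text{LHS}\le\text{RHS}$ is violated. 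Your alternative route of rerunning the proof of Theorem \ref{th3.1} with $f(ta+(1-t)b)\le f(b)$ hits the same wall: after H\"older one must bound $\int_0^1|f|^{k/(k-1)}\,dt$, and an upper bound on $f$ does not bound $|f|$.

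The honest resolution is to add the hypothesis $f\ge 0$ (which this literature tacitly assumes here): then $f$ increasing gives $\max\{|f(a)|,|f(b)|\}=f(b)$, the substitution into \eqref{3.1} is immediate, and case $(2)$ is symmetric. So your instinct to flag the mismatch between $|f|$ and $f$ was right; the error is in asserting that the bad case is vacuous rather than recognizing that it must be excluded by hypothesis.
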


\begin{theorem}\label{th3.2}
Let $f: [a,b]\rightarrow\mathbb{R}$ be continuous on $[a,b]$ such that $f \in L([a,b])$, $0\le a<b<\infty$ and let $l\ge 1$. If $|f|^{l}$ is quasi-convex on $[a,b]$,  for some fixed  $p, q > 0$, then
\begin{align}\label{3.2}
 \int_a^b (x-a)^p(b-x)^qf(x)dx
\leq  (b-a)^{p+q+1} \beta(p+1, q+1) \left( \max \left\{ \left| f(a)\right| ^l,
\left| f(b)\right| ^l\right\} \right) ^{\frac{1}{l}},
\end{align}
where $\beta (x, y)$ is the Euler Beta function.
\end{theorem}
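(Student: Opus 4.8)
The plan is to imitate the proof of Theorem \ref{th3.1}, but since here the exponent satisfies $l\ge 1$ rather than $k>1$, I would replace the use of H\"older's inequality in its bare form by the power-mean version of it (equivalently, H\"older with the conjugate pair $l$ and $l/(l-1)$ after a suitable splitting of the weight). First I would invoke Lemma \ref{le1.1}: since $(x-a)^p(b-x)^q\ge 0$ on $[a,b]$ we have $\int_a^b (x-a)^p(b-x)^q f(x)\,dx\le \int_a^b (x-a)^p(b-x)^q |f(x)|\,dx$, and applying \eqref{1.2} to $|f|$ (which is again continuous and in $L([a,b])$) turns the right-hand side into $(b-a)^{p+q+1}\int_0^1 (1-t)^p t^q |f(ta+(1-t)b)|\,dt$.

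Next I would write $(1-t)^p t^q = \big[(1-t)^p t^q\big]^{\frac{l-1}{l}}\big[(1-t)^p t^q\big]^{\frac{1}{l}}$ and apply H\"older's inequality with exponents $\frac{l}{l-1}$ and $l$ to obtain
\[
\int_0^1 (1-t)^p t^q |f(ta+(1-t)b)|\,dt \le \left[\int_0^1 (1-t)^p t^q\,dt\right]^{\frac{l-1}{l}}\left[\int_0^1 (1-t)^p t^q |f(ta+(1-t)b)|^l\,dt\right]^{\frac{1}{l}}.
\]
Now I would use that $|f|^l$ is quasi-convex, so $|f(ta+(1-t)b)|^l\le \max\{|f(a)|^l,|f(b)|^l\}$ for every $t\in[0,1]$, which bounds the second bracket by $\max\{|f(a)|^l,|f(b)|^l\}\int_0^1(1-t)^p t^q\,dt$. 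Recognizing $\int_0^1 (1-t)^p t^q\,dt=\beta(q+1,p+1)$ from the definition of the Beta function, the two brackets combine into $\beta(q+1,p+1)\big(\max\{|f(a)|^l,|f(b)|^l\}\big)^{\frac{1}{l}}$, and multiplying through by $(b-a)^{p+q+1}$ together with the symmetry $\beta(q+1,p+1)=\beta(p+1,q+1)$ yields exactly \eqref{3.2}.

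There is essentially no genuine obstacle here; the argument is a routine power-mean refinement. The only points requiring a little care are the passage to the absolute value inside the integral before applying Lemma \ref{le1.1}, the bookkeeping of the H\"older exponents $\tfrac{l}{l-1}$ and $l$ (with the degenerate case $l=1$, where the power-mean step is vacuous and one simply uses quasi-convexity directly), and invoking the symmetry of the Euler Beta function to match the stated form of the constant.
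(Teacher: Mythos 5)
Your proposal is correct and follows essentially the same route as the paper's own proof: split the weight as $[(1-t)^pt^q]^{\frac{l-1}{l}}[(1-t)^pt^q]^{\frac{1}{l}}$, apply H\"older with exponents $\frac{l}{l-1}$ and $l$, invoke quasi-convexity of $|f|^l$, and identify the Beta integrals. Your explicit passage to $|f|$ before applying Lemma \ref{le1.1} and your remark on the degenerate case $l=1$ are minor points of extra care that the paper leaves implicit.
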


\begin{proof}
By Lemma \ref{le1.1}, H\"older's inequality, the definition of Beta function and the fact that $|f|^{l}$ is quasi-convex on $[a,b]$, we have
\begin{align*}
&\int_a^b (x-a)^p(b-x)^qf(x)dx\\
=&(b-a)^{p+q+1} \int_0^1 \left[(1-t)^pt^q\right]^\frac{l-1}{l} \left[(1-t)^pt^q\right]^\frac{1}{l} f(ta+(1-t)b)dt\\
\leq & (b-a)^{p+q+1}\left[\int_0^1 (1-t)^{p}t^{q}dt\right]^\frac{l-1}{l}
\left[\int_0^1 (1-t)^{p}t^{q} |f(ta+(1-t)b)|^{l} dt\right]^\frac{1}{l} \\
\leq & (b-a)^{p+q+1}\left[\beta(q+1, p+1)\right]^{\frac{l-1}{l}}  \left[\max \left\{ \left| f(a)\right| ^l,
\left| f(b)\right| ^l\right\} \beta(q+1, p+1) \right]^\frac{1}{l}\\
=&(b-a)^{p+q+1} \beta(p+1, q+1) \left( \max \left\{ \left| f(a)\right| ^l,
\left| f(b)\right| ^l\right\} \right) ^{\frac{1}{l}},
\end{align*}
which completes the proof.
\end{proof}

\begin{corollary}
\label{co3.2} Let $f$ be as in Theorem \ref{th3.2}. Additionally, if

$(1)$ $f$ is increasing, then we have
\begin{align*}
 \int_a^b (x-a)^p(b-x)^qf(x)dx
\leq    (b-a)^{p+q+1} \beta(p+1, q+1)f(b).
\end{align*}

$(2)$ $f$ is decreasing, then we have
\begin{align*}
 \int_a^b (x-a)^p(b-x)^qf(x)dx
\leq  (b-a)^{p+q+1} \beta(p+1, q+1)f(a).
\end{align*}
\end{corollary}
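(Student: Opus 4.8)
The plan is to mirror the proofs of Theorems \ref{th2.2} and \ref{th2.3}: start from the integral identity of Lemma \ref{le1.1}, apply H\"older's inequality in ``power-mean'' form, and then invoke quasi-convexity of $|f|^{l}$. Concretely, by Lemma \ref{le1.1} with $x=ta+(1-t)b$ the left-hand side of \eqref{3.2} equals $(b-a)^{p+q+1}\int_0^1 (1-t)^p t^q f(ta+(1-t)b)\,dt$, so it suffices to estimate this last integral.

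Since $l\ge 1$, I would split the weight as $(1-t)^p t^q = \big[(1-t)^p t^q\big]^{(l-1)/l}\big[(1-t)^p t^q\big]^{1/l}$ and apply H\"older's inequality with conjugate exponents $l/(l-1)$ and $l$ (the step being vacuous when $l=1$). This produces the factor $\big(\int_0^1 (1-t)^p t^q\,dt\big)^{(l-1)/l}$ times $\big(\int_0^1 (1-t)^p t^q |f(ta+(1-t)b)|^{l}\,dt\big)^{1/l}$. Quasi-convexity of $|f|^{l}$ gives $|f(ta+(1-t)b)|^{l}\le \max\{|f(a)|^{l},|f(b)|^{l}\}$ for every $t\in[0,1]$, so this constant pulls out of the second integral and leaves a further copy of $\int_0^1 (1-t)^p t^q\,dt$. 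By the definition of the Beta function, $\int_0^1 (1-t)^p t^q\,dt = \beta(q+1,p+1) = \beta(p+1,q+1)$; the exponents $(l-1)/l$ and $1/l$ sum to $1$, so the two factors merge into a single $\beta(p+1,q+1)$, yielding exactly \eqref{3.2}.

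The argument is routine; the only points requiring care are the bookkeeping of the H\"older exponents (so that the two Beta integrals combine into the single factor $\beta(p+1,q+1)$) and the use of the symmetry $\beta(p+1,q+1)=\beta(q+1,p+1)$ to present the estimate in the displayed symmetric form. The main ``obstacle'' is therefore purely cosmetic rather than mathematical. Finally, Corollary \ref{co3.2} follows immediately: if $f$ is increasing one replaces $\max\{|f(a)|^{l},|f(b)|^{l}\}$ by $|f(b)|^{l}$, and if $f$ is decreasing by $|f(a)|^{l}$, after which the exponent $1/l$ cancels the $l$-th power.
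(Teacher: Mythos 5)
Your argument is exactly the paper's: Corollary \ref{co3.2} is stated without separate proof and is meant to follow from Theorem \ref{th3.2} by replacing the max with the appropriate endpoint value under monotonicity, and your re-derivation of Theorem \ref{th3.2} coincides step for step with the paper's proof (same H\"older split of the weight, same Beta-function bookkeeping). The only caveat---present equally in the paper's own statement---is that passing from $\max\{|f(a)|^{l},|f(b)|^{l}\}$ to $|f(b)|^{l}$ for increasing $f$ tacitly assumes $f\ge 0$; otherwise one should argue directly from the pointwise bound $f(x)\le f(b)$ under the nonnegative weight $(x-a)^p(b-x)^q$, which yields the corollary without invoking quasi-convexity at all.
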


\end{document}